\newtheorem{thm}{Theorem}[section] 
\newtheorem{crl}[thm]{Corollary} 
\newtheorem{prp}[thm]{Proposition} 
\newtheorem{lm}[thm]{Lemma} 
\newtheorem{ex}[thm]{Example} 
\newtheorem{dfn}[thm]{Definition} 
\newenvironment{proof}{\noindent{\bf Proof.}}{\noindent$\Box$\par\medskip}
\newcommand{\A}{{\mathcal A}}
\newcommand{\B}{{\mathcal B}}
\newcommand{\M}{{\mathcal M}}
\newcommand{\R}{{\mathbb R}}
\newcommand{\C}{{\mathbb C}}
\newcommand{\lie}{{\mathcal L}}
\newcommand{\F}{{\mathcal F}}
\newcommand{\g}{{\mathfrak g}}
\newcommand{\supp}{\mathrm {supp}}
\newcommand{\im}{\mathrm {im}}
\newcommand{\la}{\mathrm {L}}
\newcommand{\os}{\mathrm {OS}}
\newcommand{\sudda}[1]{}
\newcommand{\gr}{\mathrm {gr}}
\newcommand{\ext}{\mathrm {Ext}} 
\newcommand{\tor}{\mathrm {Tor}} 
\newcommand{\U}{\mathrm {U}} 
\newcommand{\rank}{\mathrm {rank}}  
\newcommand{\id}{\mathrm {id}}
\newcommand{\subalg}{\mathrm {subalg}} 
\begin{document}
\date{} 
\title{The holonomy Lie algebra of a matroid}   
\author {Clas L\"ofwall}
\maketitle
\centerline{Seminar in Stockholm 9 dec 2015}
\section{Introduction}
\vspace{30pt}
I will soon start from the beginning and define a matroid and its Orlik-Solomon algebra and holonomy Lie algebra, but first I will give some background from topology and cohomology. A (central) hyperplane arrangement is a finite number of subspaces of codimension one in a finite dimensional vector space over $\R$, $\C$ or even finite fields. The space $X$ which is the complement of the union of the hyperplanes has interesting properties. If you consider $\mathbb {R}^n$ the cohomology is however rather trivial, there is just an $H^0$, since $X$ is a disjoint union of contractible sets. But it is not a trivial task to compute $|H^0|$. The way to do it is to consider the intersection lattice of the hyperplanes and compute its M\"obius function. The result is that the number of components is the sum of the absolut values of the M\"obius function (Zaslavsky 1975, \cite{Za}). 

When we change to $\mathbb{C}^n$ we get something quite different. If we think of $\mathbb{C}^n$ as $\mathbb{R}^{2n}$ the $\mathbb{C}$-subspaces of codimension one now has real codimension two and the simplest case is the real plane minus the origin, whose first cohomology group is non-zero. In fact, the general complex case may be treated with deRham cohomology using a closed but not exact one-form for each hyperplane to get the cohomology algebra (at least over $\mathbb{R}$). This is called the Orlik-Solomon algebra \cite{orl} (Brieskorn \cite{bri} showed that the cohomology has no torsion), which we will come back to. The algebra is determined by knowing which subsets of the defining forms of the hyperplanes are independent. It is here matroids come in; the main example of a matroid is the set of independent subsets of a set of vectors. Goresky-MacPherson \cite{gor}, Bj\"orner-Ziegler \cite{bz} and  Longueville and Schultz \cite{lon} study general subspace arrangements in $\mathbb {R}^{2n}$, especially subspaces of codimension 2 whose intersections have even dimension. This is the natural first step generalization of the complex case. They prove that everything hold except that the signs in the differential defining the Orlik-Solomon algebra may be different from the complex case. This is interesting, since it gives us more freedom in defining holonomy Lie algebras (and my generalization covers also this case). I should perhaps end this introduction by defining the holonomy Lie algebra (it will be done more explicitly later with generators and relations). If $\mathcal O$ is the cohomology algebra of $X$, with coefficients in some field $k$, of a complex arrangement, we have that $\ext_\mathcal O(k,k)$ is the enveloping algebra of the "homotopy" Lie algebra $\g$ of the arrangement. The holonomy Lie algebra $\eta$ of the arrangement is defined as the Lie subalgebra of $\g$ generated by its elements of degree one. The following result by Kohno \cite{koh} highly increases the interest to study $\eta$ ($k$ has characteristic zero).

$$
\eta\cong\gr(\pi_1(X))\otimes k 
$$
where $\gr(\pi_1(X))$ is the graded associative with respect to the lower central series of $\pi_1(X)$.
 
 \section{Matroids}
 Let $E$ be a finite set. A subset $\M$ of $\mathscr P(E)$ is a matroid if the following two axioms are fulfilled. 
 \begin{align*}
 1)&\quad A\in\M, \ B\subset A\quad \Rightarrow \quad B\in\M\\
 2)&\quad A,B\in\M, \ |A|<|B|\quad\Rightarrow \quad\exists x\in B\setminus A \text{ such that }A\cup\{x\}\in\M
 \end{align*}
 The axioms are obviously fulfilled if $E$ is a set of vectors in a vector space and $\M$ is the set of independent subsets of $E$. A matroid is called representable if it is given in this way. We will use  ``$A$ independent" and ``$A$ dependent" for the cases $A\in\M$ and $A\notin\M$ respectively. We will always assume that a matroid is "simple", which means that all subsets of size at most two are independent. This is true for the case when $E$ is a set of defining forms for the hyperplanes in a central arrangement. Indeed, two different hyperplanes through origin have non-parallel forms. A graph defines a matroid on its set of edges. A set of edges is independent if it contains no cycles. The matroid is simple if there are no loops and no multiple edges. A graph matroid is representable. If the vertices are $\{1,\ldots,n\}$ we may define vectors in $\R^n$ by defining for each edge between $i$ and $j$ the vector $e_i-e_j$ (corresponding to the hyperplanes $x_i-x_j=0$). The complete graph $K_n$ on $n$ vertices has an interesting holonomy Lie algebra, obtained by Kohno \cite{koh}, which we will come back to. The corresponding arrangement is called the "braid" arrangement, since its fundamental group is known to be the pure braid group. 
\subsection{Rank, bases, flats}
Let $T\subseteq E$, $\M$ a matroid on $E$. An independent subset $B$ of $T$ is called a basis for $T$ if $B$ is maximal. All bases for $T$ have the same size, by the exchange property 2). The size of a basis for $T$ is called $\rank(T)$. If $B$ is a basis for $T$, then $B\cup \{x\}$ is dependent for all $x\in T\setminus B$. A $k$-$flat$ $F$ is a maximal subset of $E$ of rank $k$. Then $F=B\cup\{x\in E;\ B\cup\{x\}\text{ dependent}\}$ where $B$ is a basis for $F$. Moreover if $B$ is independent, then $B\cup\{x\in E;\ B\cup\{x\}\text{ dependent}\}$ is a $|B|$-flat.   
\begin{lm} If $F_1$ and $F_2$ are flats, then $F_1\cap F_2$ is a flat of rank less than $\min(\rank(F_1),\rank(F_2))$. 
\end{lm}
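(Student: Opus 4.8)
The plan is to reduce everything to the explicit description of a flat recorded just before the statement: if $B$ is independent, then $B\cup\{x\in E:\ B\cup\{x\}\text{ dependent}\}$ is a flat, and conversely every flat $F$ equals this set for $B$ any basis of $F$. So to show $F_1\cap F_2$ is a flat I will exhibit a basis $B$ of it for which this "closure" formula holds, and the rank comparison will then fall out by comparing closure formulas.

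First I would fix a basis $B$ of $F_1\cap F_2$ and, using the exchange axiom 2) repeatedly, extend it to a basis $B_1$ of $F_1$ and to a basis $B_2$ of $F_2$ (any independent subset of a set $T$ extends to a basis of $T$: if it is not already maximal in $T$, axiom 2) enlarges it by an element of a genuine basis of $T$, and one iterates). The heart of the argument is the equality
$$F_1\cap F_2\;=\;B\cup\{x\in E:\ B\cup\{x\}\text{ dependent}\}.$$
The inclusion $\supseteq$ is where the hypotheses are used: if $B\cup\{x\}$ is dependent but $x\notin F_1$, then $x\notin B_1\subseteq F_1$, so applying the closure description of $F_1$ to the basis $B_1$ forces $B_1\cup\{x\}$ to be independent; but then its subset $B\cup\{x\}$ is independent by axiom 1), a contradiction. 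Hence $x\in F_1$, and symmetrically $x\in F_2$. The inclusion $\subseteq$ is immediate: any $x\in(F_1\cap F_2)\setminus B$ gives $B\cup\{x\}$ dependent because $B$ is already a maximal independent subset of $F_1\cap F_2$. With the displayed equality in hand, the quoted fact (applied to the independent set $B$) shows $F_1\cap F_2$ is a $|B|$-flat, hence a flat of rank $|B|=\rank(F_1\cap F_2)$.

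For the rank estimate, monotonicity of rank gives $\rank(F_1\cap F_2)\le\rank(F_i)$ for $i=1,2$. If equality held for $i=1$, then $B$ would be an independent subset of $F_1$ of size $\rank(F_1)$, i.e.\ a basis of $F_1$, and therefore $F_1=B\cup\{x:\ B\cup\{x\}\text{ dependent}\}=F_1\cap F_2$, so $F_1\subseteq F_2$; symmetrically $\rank(F_1\cap F_2)=\rank(F_2)$ forces $F_2\subseteq F_1$. Thus $\rank(F_1\cap F_2)<\min(\rank(F_1),\rank(F_2))$ holds exactly when neither flat contains the other, which is the hypothesis the statement tacitly intends (it plainly fails when $F_1=F_2$, or more generally whenever one flat contains the other, so I would state the lemma for incomparable flats).

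The only genuine obstacle is the $\supseteq$ direction of the displayed equality, namely making precise that $x\notin F_1$ forces $B_1\cup\{x\}$ independent; once that is isolated, everything else is bookkeeping with axioms 1) and 2) together with the already-established description of flats, and one just has to remember to flag the degenerate comparable case rather than assert strict inequality unconditionally.
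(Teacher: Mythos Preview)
Your argument is essentially the paper's own proof: choose a basis $B$ of $F_1\cap F_2$, extend to bases $B_i$ of $F_i$, and verify $F_1\cap F_2=B\cup\{x:\ B\cup\{x\}\text{ dependent}\}$ via the two inclusions. The only cosmetic difference is that for $\supseteq$ you argue by contradiction from $x\notin F_1$, whereas the paper goes directly: $B\cup\{x\}$ dependent $\Rightarrow$ $B_i\cup\{x\}$ dependent (contrapositive of axiom~1), hence $x\in F_i$. That is a one-line shortcut you could adopt in place of your contrapositive detour.

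Where you go beyond the paper is in treating the rank claim: the paper's proof stops after showing $F_1\cap F_2$ is a flat and never justifies the strict inequality $\rank(F_1\cap F_2)<\min(\rank(F_1),\rank(F_2))$. You correctly observe that the strict inequality fails whenever one flat contains the other (in particular when $F_1=F_2$), so the lemma as stated is literally false and must be read for incomparable flats; your argument that equality of ranks forces containment is the right fix.
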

\begin{proof} Let $B$ be a basis for $F_1\cap F_2$ and extend $B$ to bases $B_1$, $B_2$ for $F_1$, $F_2$ respectively. If $B\cup\{x\}$ is dependent, then $B_i\cup \{x\}$ is dependent for $i=1,2$. Hence $x\in F_1\cap F_2$. If $x\in F_1\cap F_2\setminus B$ then $B\cup \{x\}$ is dependent. Hence, by the above, $F_1\cap F_2$ is a flat.
\end{proof}
We may now define a closure operator $\bar T$ as the least flat that contains $T$. All flats form a lattice $\la(\M)$ under inclusion, which is graded by rank. If $\M$ is the matroid of a hyperplane arrangement, then $\la(\M)$ is isomorphic to the lattice of all intersections of the hyperplanes ordered by reverse inclusion, since
$$
\bigcap_{i\in T_1}H_i\subseteq\bigcap_{i\in T_2}H_i\Leftrightarrow\overline{\{n_i;i\in T_1\}}\supseteq\overline{\{n_i;i\in T_2\}}
$$
where $n_i$ is a defining form of $H_i$. 

The 2-flats are of special interest for us, since they determine the holo\-nomy Lie algebra. Since the intersection of two 2-flats is a 1-flat or a 0-flat, it follows that two different 2-flats have at most one element in common. Also all pairs are contained in a (unique) 2-flat. This is called a 2-partition of $E$ in \cite{ox2}, where it is also proven that every 2-partition is the set of 2-flats of a matroid of rank $\le3$ (the independent 3-sets are the 3-sets that are not contained in any of the sets in the 2-partition), se \cite{ox2}, Prop. 2.1.21 or \cite{lof}. 
\begin{ex}$E=\{1,2,3,4,5,6,7\}$. A 2-partition on $E$ is 
$$
\{\{1,2,3,4\},\{1,5,6\},\{2,5,7\},\{3,6,7\},\{1,7\},\{2,6\},\{3,5\},\{4,5\},\{4,6\},\{4,7\}\}
$$
\end{ex}
The matroid of rank 3 determined by the 2-partition in this example has the following dependent 3-sets. 
$$
\{1,2,3\},\{1,2,4\},\{2,3,4\},\{1,5,6\},\{2,5,7\},\{3,6,7\}
$$ 
Usually one does not give the 2-sets in the partition, since they are determined by the rest. We will use the following terminology.
\begin{dfn}
A set-arrangement on a finite set $E$ is a set $\A$ of subsets of $E$ such that $|A|\ge3$ for all $A\in\A$ and $|A\cap B|\le1$ for all $A,B\in\A$.
\end{dfn}
A set-arrangement (which may be empty) on $E$ determines a 2-partition on $E$ and vice versa.
Observe that for any matroid $\M$ we have that $\M_{\le3}=\{A\in\M;\ \rank(A)\le3\}$ is a matroid with the same set of 2-flats as $\M$.

\section{The Orlik-Solomon algebra and the holo\-nomy Lie algebra}
The Orlik-Solomon algebra on a simple matroid $\M$ on $\{e_1,\ldots,e_n\}$ is 
$$
\os(\M)=E(e_1,\ldots,e_n)/\langle\partial e_S\rangle
$$
where $\{e_1,\ldots,e_n\}$ is the exterior algebra, $\partial e_i=1$, $\partial$ is a derivation, $S=\{e_{i_1},\ldots,e_{i_k}\}$ is dependent and $e_S=e_{i_1}\cdots e_{i_k}$,  and $\langle\partial e_S\rangle$ is the ideal generated by $\{\partial e_S;\ S \text{ dependent}\}$. The algebra is graded by letting the degree of $e_i$ be 1 for all $i$. The Hilbert series $\os(\M)(z)$, i.e., the generating series for the dimensions of the different graded pieces is obtained from the M\"obius function of the lattice $\la(\M)$, e.g., if $\rank(\M)=3$, then $\os(\M)(z)=1+nz+dz^2+(d+1-n)z^3$, where $d=\sum(|A|-1)$ and the sum is taken over all 2-flats $A$. Observe that $e_S=0$ in $\os(\M)$ if $S$ is dependent, since $e_S=e_{i_1}\partial e_S$, whence $\os(\M)_i=0$ if $i>\rank(\M)$, which also follows from the fact that $\la(\M)$ vanishes above $\rank(\M)$. (Also, as was mentioned in the introduction, the number of components of a hyperplane arrangement in $\R^n$ can be obtained by evaluating the Hilbert series $\os(\M)(z)$ at $z=1$.)

The quadratic relations are $ab-ac+bc$ for all $\{a,b,c\}$ which are dependent, i.e., are contained in a 2-flat. In order to define the holonomy Lie algebra we want to compute the orthogonal complement of these quadratic relations. In this process it is enough to consider one 2-flat at a time, since a quadratic monomial occurs only in one 2-flat. So it is enough to consider a matroid $\M_2$ of rank 2 on $\{1,\ldots,n\}$, where $\{1,\ldots,n\}$ is the only 2-flat ($\M_2$ is realized as $n$ lines in $\C^2$). Write $\os(\M_2)$ as a quotient of a free non-commutative algebra. Here is a (triangular) independent set of quadratic relations: $e_1^2,\ldots,e_n^2,\ e_je_i+e_ie_j$ for $j>i$, followed by the relations defined by the triples $123,\ 134,\ 145,\ldots, 1(n-1)n,\ 234,\ldots, 2(n-1)n,\ldots, (n-2)(n-1)n$. In total this gives $n+{n\choose 2}+{n-1\choose2}=n^2-(n-1)$ independent relations. Hence the orthogonal complement is of dimension at most $n-1$. Let $x_1,\ldots,x_n$ be a dual basis of $e_1,\ldots,e_n$. An element in the orthogonal complement is a linear combination of $[x_i,x_j]=x_ix_j-x_jx_i$. Here is a set of solutions:
\begin{align*}
&[e_1,e_1]+[e_1,e_2]+\cdots+[e_1,e_n]\\
&[e_3,e_1]+[e_3,e_2]+\cdots+[e_3,e_n]\\
&\vdots\\
&[e_n,e_1]+[e_n,e_2]+\cdots+[e_n,e_{n}]\\
\end{align*} 
In a suitable order, the leading monomials are $e_1e_2,e_1e_3,\ldots,e_1e_n$ and hence this gives $n-1$ independent elements in the orthogonal complement, so it is in fact a basis. Summing the above elements, we get also $[e_2,e_1]+[e_2,e_2]+\cdots+[e_2,e_n]$. Letting $c=\sum_{i=1}^ne_i$ we get that the holonomy Lie algebra of the matroid of rank 2 on $\{e_1,\ldots e_n\}$ is 
$$
\eta=\F(e_1,\ldots,e_n)/\langle[e_i,c]; i=1,\ldots,n\rangle
$$
where $\F(e_1,\ldots,e_n)$ is the free Lie algebra on $e_1,\ldots,e_n$. Hence $\eta/\langle c\rangle$ is free on $n-1$ generators and $\eta$ is the direct product of a free Lie algebra on $n-1$ generator and a free Lie algebra on one generator. The holonomy Lie algebra of an arbitrary simple matroid on $E=\{e_1,\ldots,e_n\}$ (or: of a set-arrangement on $E$) is the free Lie algebra on $e_1,\ldots,e_n$ modulo relations as above for each 2-flat with at least three elements (or: for each set in the set-arrangement). The variables in a 2-flat with two elements commute (or: the pair of variables which are not contained in any set of the set-arrangement commute). The question is how these "local" Lie algebras defined from the 2-flats are glued together. 
\section{Structure of the holonomy Lie algebra}
Much of the theory for the structure of the full holonomy Lie algebra in terms of the local ones does not depend on how the relations look like. Hence we assume that we are given a set-arrangement $\A$ on $E=\{x_1,\ldots,x_n\}$ with local Lie algebras $L_A=\F(A)/\langle R_A\rangle$, where $R_A\subset\F(A)'=[\F(A),\F(A)]$, for each $A\in\A$. Define $\lie=\F(E)/\langle R\rangle$, where 
$$
R=\cup_{A\in\A} R_A\cup\{[x_i,x_j];\ \{x_i,x_j\} \text{ is not a subset of any } A\in\A\}
$$

Define a Lie homomorphism $\pi_A:\ \F(E)\to L_A$ by sending $x\in E\setminus A$ to zero. Since any Lie-monomial in $R\setminus R_A$ contains at least one generator $\notin A$, the map factors and defines a surjection $\pi_A:\ \lie{}\to L_A$. There is also a natural map $s_A:\ L_A\to \lie{}$ and $\pi_A\circ s_A=\id_{L_A}$ for each $A\in\A$. Hence we have a split exact sequence of Lie algebras for each $A\in\A$,
\begin{equation}\label{eq1}
0\to I_A\to \lie{}\to L_A\to0
\end{equation}
Now consider the derived Lie algebras, $\lie{}'$, $L_A'$ and the Lie homomorphisms $\pi_A',s_A'$. We have that $\pi_B'\circ s_A'=0$ for $B\neq A$. There is a surjective Lie homomorphism $\pi=\oplus\pi_A':\lie{}'\to \oplus L_A'$, which has a section $s=\sum s_A'$ that is a module homomorphism (not Lie in general). Hence we have an exact sequence of Lie algebras
\begin{equation}\label{eq2}
0\to I\to \lie{}'\to\oplus_{A\in\A}L_A'\to0
\end{equation} 
and $\sum \im(s_A')\subset \lie{}'$ is a direct sum, since $s$ is injective.
\begin{ex} In {\rm \cite{roo}} the following set-arrangement is studied
$$
(124),\ (135),\ (236),\ (167),\ (258),\ (457),\ (468)
$$
and it is (almost) proved that the ideal $I$ in {\rm (\ref{eq2})} is one-dimensional in each degree $\ge3$.
\end{ex}
\begin{prp}\label{prp}
$$
I=0\ \Leftrightarrow \ [x,\im(s_A')]=0 \text{ for all }A \text{ and all }x\in E\setminus A
$$
\end{prp}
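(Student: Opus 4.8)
The plan is to prove the two implications from the split sequence (\ref{eq2}). Since there $\pi=\oplus\pi_A'$ is surjective with injective module section $s=\sum s_A'$, we have $\lie{}'=\bigl(\sum_A\im(s_A')\bigr)\oplus I$, so $I=0$ if and only if $\lie{}'=\sum_A\im(s_A')$. Throughout I will use that each $\im(s_A')=s_A(L_A')$ is a Lie subalgebra of $\lie{}'$ (the image of a subalgebra under the Lie-algebra map $s_A$), that $\pi_B'\circ s_A'$ equals $\id$ when $B=A$ and $0$ when $B\neq A$, and two elementary facts. (i) Since $\lie{}/\lie{}'$ is abelian, $\lie{}'$ is the ideal of $\lie{}$ generated by the brackets $[x_i,x_j]$; as $\lie{}$ is generated in degree one by $E$, this means $\lie{}'$ is the smallest subspace of $\lie{}$ containing all $[x_i,x_j]$ and stable under $\ad(z)$ for every $z\in E$. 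Moreover each nonzero $[x_i,x_j]$ equals $s_A([x_i,x_j])$, hence lies in $\im(s_A')$, where $A$ is the unique member of $\A$ with $\{x_i,x_j\}\subseteq A$. (ii) If $z\in A$, then $z\in s_A(L_A)$, so $[z,\im(s_A')]=[z,s_A(L_A')]\subseteq s_A([L_A,L_A'])\subseteq s_A(L_A')=\im(s_A')$.

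To obtain $\Leftarrow$, suppose $[x,\im(s_A')]=0$ for every $A$ and every $x\in E\setminus A$, and set $V=\sum_A\im(s_A')\subseteq\lie{}'$. For a fixed $z\in E$, each summand satisfies $[z,\im(s_A')]\subseteq\im(s_A')$ when $z\in A$ (by (ii)) and $[z,\im(s_A')]=0$ when $z\notin A$ (by the hypothesis), so $[z,V]\subseteq V$; thus $V$ is stable under every $\ad(z)$, $z\in E$. Since $V$ also contains every $[x_i,x_j]$ by (i), the characterisation in (i) gives $\lie{}'\subseteq V$, so $\lie{}'=V=\sum_A\im(s_A')$ and $I=0$.

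To obtain $\Rightarrow$, suppose $I=0$, so that $\lie{}'=\bigoplus_A\im(s_A')$; I must show $[x,m]=0$ whenever $x\in E\setminus A$ and $m\in\im(s_A')$. Note $[x,m]\in\lie{}'$ since $\lie{}'$ is an ideal, so write $[x,m]=\sum_C n_C$ with $n_C\in\im(s_C')$. Applying $\pi_B'$ and using $\pi_B'\circ s_C'=0$ for $C\neq B$ together with $\pi_B'\circ s_B'=\id$ gives $n_B=s_B'\bigl(\pi_B'([x,m])\bigr)=s_B'\bigl([\pi_B(x),\pi_B(m)]\bigr)$, the last equality because $\pi_B$ is a Lie homomorphism whose restriction to $\lie{}'$ is $\pi_B'$. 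If $B=A$, then $\pi_A(x)=0$ because $x\in E\setminus A$, so $n_A=0$. If $B\neq A$, write $m=s_A(\mu)$ with $\mu\in L_A'$; then $\pi_B(m)=(\pi_B\circ s_A)(\mu)$, and $\pi_B\circ s_A:L_A\to L_B$ is a Lie homomorphism whose image is the subalgebra of $L_B$ generated by $A\cap B$. Since $\A$ is a set-arrangement, $|A\cap B|\le1$, so that subalgebra is at most one-dimensional, hence abelian, hence $(\pi_B\circ s_A)(L_A')=0$; thus $\pi_B(m)=0$ and $n_B=0$. So every $n_C$ vanishes and $[x,m]=0$.

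I expect the whole argument to be essentially formal once this setup is fixed. The two points that take a moment of thought are, in $\Leftarrow$, recognising that $\lie{}'$ is the smallest $\ad(E)$-stable subspace containing the degree-two brackets (which is what lets $\sum_A\im(s_A')$ immediately absorb $\lie{}'$), and, in $\Rightarrow$, noticing that the set-arrangement condition $|A\cap B|\le1$ is exactly what forces $\pi_B\circ s_A$ to annihilate $L_A'$ --- without it the proposition would fail. Beyond these I do not anticipate any obstacle, and no ingredient is needed past the split sequences already established.
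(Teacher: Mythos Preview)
Your proof is correct and follows the same route as the paper. The paper compresses it into a single circular chain of implications
\[
I=0\Rightarrow\ker(\oplus\pi_B')=0\Rightarrow[x,\im(s_A')]=0\Rightarrow\textstyle\sum\im(s_A')\text{ is an ideal}\supseteq\lie{}'\Rightarrow s\text{ iso}\Rightarrow I=0,
\]
whereas you separate the two directions; the mathematical content is identical, and your explicit use of the set-arrangement condition $|A\cap B|\le1$ to force $(\pi_B\circ s_A)(L_A')=0$ is precisely the detail the paper leaves unwritten in the step $\ker(\oplus\pi_B')=0\Rightarrow[x,\im(s_A')]=0$.
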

\begin{proof}
\begin{align*}
&I=0\Rightarrow\ker(\oplus \pi_B')=0\Rightarrow[x,\im(s_A')]=0, x\notin A\Rightarrow
\\
&\sum\im(s_A')\text{ is an ideal containing }\lie{}'\Rightarrow 
\\
&s\text{ surjective }\Rightarrow s\text{ iso }\Rightarrow\pi\text{ mono }\Rightarrow I=0
\end{align*}
\end{proof}
\begin{dfn} A Lie algebra $\lie{}$ of a set-arrangement $\A$ is said to be decomposable if $I=0$.
\end{dfn}
In order to get more precise results, we introduce a condition on the local Lie algebras which is satisfied for the holonomy Lie algebra.
\begin{dfn} A Lie algebra $\lie{}$ of a set-arrangement $\A$ is said to satisfy {\rm replacement} if for any $A\in\A$ and any $x\in A$ such that also $x\in B$ for some $B\in\A$, $B\ne A$, it holds that $L_A'\subseteq\subalg_{L_A}(A\setminus\{x\})$.
\end{dfn}
Here $\subalg_L(X)$ denotes the Lie subalgebra of $L$ generated by the set $X\subseteq L$.

The following proposition is easily proved by induction.
\begin{prp}\label{prp2} If $\lie{}$ satisfies replacement, then 
\begin{itemize}
\item $I=0\Leftrightarrow [x,[y,z]]$ for all $A\in\A$ and all $x\in E\setminus A$ and all $y,z\in A$ 
\item $I_A=\subalg_\lie(E\setminus A)$ for all $A\in\A$
\end{itemize}
\end{prp}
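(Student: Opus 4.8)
The plan is to prove the second bullet first --- it is the cleaner statement and it exhibits the only use of \emph{replacement} --- and then deduce the first by a single induction on degree, using Proposition~\ref{prp}.

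For the second bullet I would identify $\pi_A\colon\lie\to L_A$ with the quotient map $\lie\to\lie/\langle E\setminus A\rangle$: among the defining relations, $R_A$ survives modulo $\langle E\setminus A\rangle$ while the others die (each $R_B$ with $B\ne A$ because $|A\cap B|\le1$ collapses $\F(B)'$ once the generators in $B\setminus A$ are killed, and each extra commutator relation because it involves a generator outside $A$). Hence $I_A=\ker\pi_A=\langle E\setminus A\rangle$, the ideal generated by $E\setminus A$, and trivially $\subalg_\lie(E\setminus A)\subseteq I_A$. For the reverse inclusion it suffices to see $\subalg_\lie(E\setminus A)$ is already an ideal, i.e. stable under $\ad(e)$ for every $e\in E$: this is clear for $e\in E\setminus A$, and for $a\in A$ one induces on bracket length, the only non-formal point being the base case $[a,z]$ with $z\in E\setminus A$. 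If $\{a,z\}$ lies in no block then $[a,z]=0$; otherwise $\{a,z\}\subseteq B$ with $B\ne A$ and $A\cap B=\{a\}$, so $a$ is shared in $B$, and replacement gives $[\bar a,\bar z]\in L_B'\subseteq\subalg_{L_B}(B\setminus\{a\})$; pushing this through the Lie section $s_B$ yields $[a,z]\in\subalg_\lie(B\setminus\{a\})\subseteq\subalg_\lie(E\setminus A)$ because $B\setminus\{a\}\subseteq E\setminus A$. The inductive step is a plain Jacobi expansion, and then the normalizer of $\subalg_\lie(E\setminus A)$ is a subalgebra containing $E$, hence all of $\lie$; so $\subalg_\lie(E\setminus A)$ is an ideal and equals $I_A$.

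For the first bullet, one implication is immediate from Proposition~\ref{prp}, since $[y,z]=s_A([\bar y,\bar z])\in\im(s_A')$ for $y,z\in A$. For the converse I would again use Proposition~\ref{prp} and prove $[x,\im(s_A')]=0$ for all $A$ and all $x\in E\setminus A$. Since $\im(s_A')=\subalg_\lie(A)'$ is graded and coincides with $\subalg_\lie(A)$ in degrees $\ge2$, the task is: $[x,w]=0$ for all $n\ge2$, all $A$, all $x\in E\setminus A$, and all $w\in\subalg_\lie(A)_n$. I would prove these simultaneously by induction on $n$, with $n=2$ being exactly the hypothesis. For $n\ge3$ write $w$ as a sum of brackets $[u,v]$ with $u\in\subalg_\lie(A)_i$, $v\in\subalg_\lie(A)_j$, $i+j=n$, $1\le i\le j$. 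If $i\ge2$ then $i,j<n$, so $[x,u]=[x,v]=0$ by induction and $[x,[u,v]]=0$. If $i=1$, so $[a,v]$ with $a\in A$ and $v\in\subalg_\lie(A)_{n-1}$, then $[x,[a,v]]=[[x,a],v]+[a,[x,v]]$ with $[x,v]=0$ by induction; and for $[[x,a],v]$, either $[x,a]=0$, or $\{x,a\}\subseteq B$ with $B\ne A$ and $A\cap B=\{a\}$, so $a$ is shared and replacement rewrites $[x,a]$ in $\lie$ as a combination of brackets $[b,b']$ with $b,b'\in B\setminus\{a\}\subseteq E\setminus A$; then $[[b,b'],v]=[b,[b',v]]-[b',[b,v]]=0$, since the inductive hypothesis, applied to the outside elements $b,b'$, kills $[b,v]$ and $[b',v]$. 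This closes the induction, and Proposition~\ref{prp} then gives $I=0$.

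The main obstacle is exactly that last step. A priori $[x,a]$ need not vanish, and $[a,v]$ with $a\in A$ cannot be simplified inside $\subalg_\lie(A)$, so one is stuck unless the offending generator $a$ can be traded for generators lying outside $A$ --- which is precisely what the replacement hypothesis supplies; after that, the uniformity of the inductive statement over all blocks and all outside generators does everything else. The remaining points --- that $\lie$ and each $\subalg_\lie(A)$ are graded, that $\subalg_\lie(A)'=\bigoplus_{n\ge2}\subalg_\lie(A)_n$ (each being generated in degree $1$), and that $s_A$ is a Lie homomorphism sending generators to generators --- are immediate from the presentations, and I would only note them in passing.
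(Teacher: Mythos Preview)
Your proof is correct and is precisely the induction argument the paper alludes to with its one-line ``easily proved by induction''; you have simply carried it out in detail, isolating the two places where replacement is invoked (rewriting $[a,z]$, respectively $[x,a]$, inside the neighbouring block $B$ so as to eliminate the shared generator $a$). There is nothing to add.
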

The first part of the proposition is a result in \cite{pap} and is implemented in Macaulay2.

The formulas (\ref{eq1}) and (\ref{eq2}) may be generalized in the following way.
\begin{dfn}
Given a set-arrangement $\A$ on $E$, a subset $\B$ of $\A$ is {\rm closed} in $\A$ if for all $A\in\A\setminus\B$ it holds that $|A\cap \supp(\B)|\le1$ where $\supp(\B)=\cup_{B\in\B}B$. 
\end{dfn}
Let $\A$ be a set-arrangement on $E$ and $\B$ a subset of $\A$ which is closed in $\A$. Define $L_{\B}=\F(\supp(\B))/\langle R_{\B}\rangle$, where 
$$
R_{\B}=\cup_{B\in\B}R_B\cup\{[x,y];\ x,y\in\supp(\B) \text{ and }\forall B\in\B\ \{x,y\}\not\subseteq B\}
$$
It follows that $L_{\B}=L_A$, when $\B=\{A\}$ and given the definitions of the local Lie algebras $L_A$ for $A\in\A$, $L_{\B}$ is the Lie algebra of $\B$ viewed as a set-arrangement on $\supp(\B)$.

A map $\pi_{\B}: \lie\to L_{\B}$ is defined by sending the variables in $E\setminus \supp(\B)$ to zero. A natural map $s_{\B}: L_{\B}\to \lie$ may be defined since two different elements in $\supp(\B)$ is not contained in a single $A$, where $A\not\in\B$. Also, $\pi_{\B}\circ s_{\B}=\id_{L_{\B}}$. Hence we have a split exact sequence of Lie algebras
\begin{equation}\label{eq3}
0\to I_{\B}\to \lie{}\to L_{\B}\to0
\end{equation}
Now suppose we have disjoint closed subsets $\B_i,\ i=1,\ldots k$ of $\A$, such that $\cup_{i=1}^k\B_i=\A$. In the same way as above, we may define  a surjective Lie homomorphism $\pi=\oplus\pi_{\B_i}':\lie{}'\to \oplus L_{\B_i}'$, which has a section $s=\sum s_{\B_i}'$ that is a module homomorphism (not Lie in general). Hence we have an exact sequence of Lie algebras
\begin{equation}\label{eq4}
0\to J\to \lie{}'\to\oplus_{i=1}^kL_{\B_i}'\to0
\end{equation} 
and $\sum \im(s_{\B_i}')\subset \lie{}'$ is a direct sum.

\begin{prp} We have

\begin{itemize}
\item $J$ is generated as an ideal by $[x,\im(s_{\B_i}')]\text{ for all }i \text{ and all }x\in E\setminus \supp(\B_i)$
\item $J=0\ \Leftrightarrow \ [x,\im(s_{\B_i}')]=0 \text{ for all }i \text{ and all }x\in E\setminus \supp(\B_i)$
\end{itemize}

\end{prp}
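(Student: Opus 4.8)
The plan is to imitate the derivation of (\ref{eq1})--(\ref{eq2}) and the proof of Proposition~\ref{prp}, reading a closed block $\B_i$ in place of a single set $A$. Two preliminary observations carry the argument. First, $\im(s_{\B_i})=s_{\B_i}(L_{\B_i})=\subalg_{\lie}(\supp(\B_i))$, and since $L_{\B_i}'$ is an ideal of $L_{\B_i}$, $s_{\B_i}$ is a Lie homomorphism, and $s_{\B_i}$ fixes each generator $x\in\supp(\B_i)$, one gets $[x,\im(s_{\B_i}')]\subseteq\im(s_{\B_i}')$ for every $x\in\supp(\B_i)$. Second -- and this is where closedness enters -- $\pi_{\B_j}'\circ s_{\B_i}'=0$ for $i\ne j$: one has $s_{\B_i}'(L_{\B_i}')=(\subalg_{\lie}(\supp(\B_i)))'$, so applying the Lie homomorphism $\pi_{\B_j}$ yields $(\subalg_{L_{\B_j}}(W))'$ with $W=\supp(\B_i)\cap\supp(\B_j)$, and $\subalg_{L_{\B_j}}(W)$ is abelian because no two distinct elements of $W$ lie in a common member of $\B_j$ (if $x,y\in B\in\B_j$ then $B\notin\B_i$ as the blocks are disjoint, whence $|B\cap\supp(\B_i)|\le1$ by closedness of $\B_i$, contradicting $x,y\in B\cap\supp(\B_i)$); an abelian subalgebra has trivial derived subalgebra.

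Granting these, here is the core computation: for $v\in\im(s_{\B_i}')$, $x\in E\setminus\supp(\B_i)$, and any $j$, the element $\pi_{\B_j}'([x,v])=[\pi_{\B_j}(x),\pi_{\B_j}'(v)]$ vanishes -- for $j=i$ because $\pi_{\B_i}(x)=0$, for $j\ne i$ because $\pi_{\B_j}'(v)=0$ by the second observation -- so every such $[x,v]$ lies in $J=\ker\pi$. Since $J$ is an ideal of $\lie$ (it is $(\bigcap_i I_{\B_i})\cap\lie'$, an intersection of ideals of $\lie$), the ideal $N$ of $\lie$ generated by all these commutators satisfies $N\subseteq J$. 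For the reverse inclusion I would show that modulo $N$ the section becomes onto: each $\pi_{\B_j}$ kills every generator of $N$, hence all of $N$, and so factors through $\lie/N$; it therefore suffices to prove that $V:=\sum_i\im(s_{\B_i}')+N$ equals $\lie'$. Now $V$ is $\ad_x$-stable for every $x\in E$ (if $x\in\supp(\B_i)$ by the first observation; if $x\notin\supp(\B_i)$ the bracket with $\im(s_{\B_i}')$ lies in $N$; and $[x,N]\subseteq N$), hence $V$ is an ideal of $\lie$; it contains every $[x_p,x_q]$ (if $\{x_p,x_q\}\subseteq A\in\A$ then $A$ lies in a unique block $\B_i$ and $[x_p,x_q]\in\im(s_{\B_i}')$, otherwise $[x_p,x_q]=0$ in $\lie$), and those generate $\lie'$ as an ideal, so $\lie'\subseteq V\subseteq\lie'$. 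Hence $V=\lie'$, so the induced section $\bar s$ on $\lie'/N$ is surjective; as $\bar\pi\circ\bar s=\id$, $\bar s$ is then an isomorphism, $\bar\pi$ is injective, and $\ker\pi=N$, i.e.\ $J=N$. This is the first bullet; the second follows at once, since an ideal is zero exactly when its generators are.

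I expect the one genuinely non-formal point to be the second observation $\pi_{\B_j}'\circ s_{\B_i}'=0$: in the single-block case behind (\ref{eq2}) it was automatic from $|A\cap B|\le1$, but here $\supp(\B_i)\cap\supp(\B_j)$ can be large, and it is precisely the closedness hypothesis that makes $\subalg_{L_{\B_j}}(\supp(\B_i)\cap\supp(\B_j))$ abelian. Everything else is routine ideal bookkeeping together with the module splitting $\lie'=J\oplus\bigoplus_i\im(s_{\B_i}')$ coming from $s$, exactly parallel to what was done for (\ref{eq1})--(\ref{eq2}) and Proposition~\ref{prp}. If one prefers, the equivalence can also be read off directly in that style: when all the commutators vanish, $\sum_i\im(s_{\B_i}')$ is already an ideal containing $\lie'$, so $s$ is onto, hence an isomorphism, $\pi$ is mono, and $J=0$; conversely, if $J=0$ then $\pi$ is an isomorphism, and the core computation above gives $\pi([x,v])=0$, forcing $[x,v]=0$.
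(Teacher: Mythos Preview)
Your proof is correct and follows exactly the line the paper indicates, namely ``the proof is the same as the proof of Proposition~\ref{prp}'': you run the circular chain $J=0\Rightarrow[x,\im(s_{\B_i}')]=0\Rightarrow\sum_i\im(s_{\B_i}')$ is an ideal containing $\lie'\Rightarrow s$ surjective $\Rightarrow s$ iso $\Rightarrow\pi$ mono $\Rightarrow J=0$, and for the first bullet you make the natural upgrade to the quotient $\lie'/N$. You also make explicit the one point the paper leaves implicit in setting up~(\ref{eq4}), namely that $\pi_{\B_j}'\circ s_{\B_i}'=0$ for $i\ne j$ genuinely needs the closedness of the $\B_i$ (so that $\subalg_{L_{\B_j}}(\supp(\B_i)\cap\supp(\B_j))$ is abelian); this is a helpful clarification rather than a departure from the paper's argument.
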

 
The proof is the same as the proof of Proposition \ref{prp}. 
 We also have the following generalization of Proposition \ref{prp2}.
 \begin{prp}\label{prp3} If $\lie{}$ satisfies replacement, then 
\begin{itemize}
\item $J$ is generated as an ideal by $[x,[y,z]]\text{ for all }
y,z\in \supp(\B_i),\
x\in E\setminus\supp(\B_i),\ i=1,\ldots,k $
\item $J=0\Leftrightarrow [x,[y,z]]=0\text{ for all }
y,z\in \supp(\B_i),\
x\in E\setminus\supp(\B_i),\ i=1,\ldots,k $
\item $I_{\B_i}=\subalg_\lie(E\setminus \supp(\B_i))$ for all $i=1,\ldots k$
\end{itemize}
\end{prp}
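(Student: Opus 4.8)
Throughout write $S_i=\supp(\B_i)$, so that $E\setminus S_i$ is what occurs in the statement, and recall that $\im(s_{\B_i}')=\subalg_\lie(S_i)'$. The second bullet follows at once from the first: $J$ is then the ideal generated by the listed triple brackets, so it is zero exactly when all of them are. The third bullet is essentially formal once one knows that $\subalg_\lie(E\setminus S_i)$ is an ideal, which is a by-product of the lemma used for the first bullet. So the plan is to prove the first bullet and then deduce the rest. By the Proposition just above, $J$ is the ideal generated by all $[x,\im(s_{\B_i}')]$ with $x\in E\setminus S_i$; let $K$ be the ideal generated by the brackets $[x,[y,z]]$ with $y,z\in S_i$, $x\in E\setminus S_i$, $i=1,\dots,k$. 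Since $[y,z]\in\subalg_\lie(S_i)'$ for $y,z\in S_i$, we have $K\subseteq J$ immediately, and what remains is to prove $[x,\im(s_{\B_i}')]\subseteq K$ for all $i$ and all $x\in E\setminus S_i$.

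The one substantial new ingredient, which I would isolate as a lemma, is that replacement forces, for $i\ne j$,
$$\subalg_\lie(S_i)'\subseteq\subalg_\lie(S_i\setminus S_j)\qquad(\text{so in particular }\im(s_{\B_i}')\subseteq\subalg_\lie(E\setminus S_j)),$$
and, by the same argument, that $\subalg_\lie(E\setminus S_i)$ is an ideal of $\lie$. To see the displayed inclusion I would first check that every degree two element $[y,z]$ with $y,z\in S_i$ already lies in $\subalg_\lie(S_i\setminus S_j)$: if $y,z$ both lie in $S_i\cap S_j$ then no member of $\A$ can contain both (by closedness such a set would have to lie in $\B_i\cap\B_j=\emptyset$), so $[y,z]=0$; otherwise exactly one of them, say $z$, lies in $S_j$, and — taking $C\in\A$ with $\{y,z\}\subseteq C$, there being nothing to prove otherwise — closedness of $\B_i$ puts $C\in\B_i$, hence $C\subseteq S_i$, while closedness of $\B_j$ gives $C\cap S_j=\{z\}$; since $z$ lies in a set of $\B_j$ distinct from $C$, replacement applied to the pair $(C,z)$ gives $\im(s_C')\subseteq\subalg_\lie(C\setminus\{z\})\subseteq\subalg_\lie(S_i\setminus S_j)$, and $[y,z]\in\subalg_\lie(C)'=\im(s_C')$. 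Knowing this, $\subalg_\lie(S_i\setminus S_j)$ is an ideal of $\subalg_\lie(S_i)$ — a subalgebra generated by a subset of the generators and containing every degree two element is automatically an ideal, by a routine bracket induction — and the quotient is generated by the pairwise commuting images of $S_i\cap S_j$, hence abelian, so $\subalg_\lie(S_i)'$ maps to zero in it. The assertion about $\subalg_\lie(E\setminus S_i)$ is the identical computation with $E\setminus S_i$ in place of $S_i\setminus S_j$, using that any element of $S_i$ lies in a set of $\B_i$ and hence, together with any set outside $\B_i$ that also meets it, in two sets of $\A$.

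With the lemma available I would prove ``$[u,w]\in K$ for all $i$, all $w\in\im(s_{\B_i}')$ of degree $\le d$ and all $u\in\subalg_\lie(E\setminus S_i)$'' by induction on $d$; for fixed $d$ a further induction on $\deg u$, whose inductive step is one Jacobi step absorbing the pieces already known to lie in $K$, reduces matters to the case that $u=x$ is a single generator. The case $d=2$ is the definition of $K$ (plus that trivial induction on $\deg u$). For $d\ge3$ write $w=[s_1,w_0]$ with $s_1\in S_i$ and $w_0\in\im(s_{\B_i}')$ of degree $d-1$ — possible because $\im(s_{\B_i}')$ is the ideal of $\subalg_\lie(S_i)$ generated by its degree two part — and expand
$$[x,w]=[[x,s_1],w_0]+[s_1,[x,w_0]].$$
The second term lies in $K$ because $[x,w_0]\in K$ by induction and $K$ is an ideal. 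For the first term, either $[x,s_1]=0$, or $\{x,s_1\}$ lies in some $A'\in\A$ which, because $x\notin S_i$, belongs to some $\B_j$ with $j\ne i$; then $[x,s_1]\in\subalg_\lie(A')'\subseteq\im(s_{\B_j}')$ has degree $2\le d-1$, while by the lemma $w_0\in\subalg_\lie(S_i\setminus S_j)\subseteq\subalg_\lie(E\setminus S_j)$, so $[[x,s_1],w_0]=-[w_0,[x,s_1]]$ lies in $K$ by the induction hypothesis applied with $j$ in place of $i$. This closes the induction and yields the first two bullets. I expect this bookkeeping to be the only delicate point: the induction must be arranged so that the ``bad'' term $[[x,s_1],w_0]$, in which the generator $x$ has been absorbed into the two-letter bracket $[x,s_1]$, counts as strictly smaller, and it is exactly the lemma — letting one read $w_0$ inside $\subalg_\lie(E\setminus S_j)$ — that permits re-indexing it as a case already settled for $j$.

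For the third bullet, $\subalg_\lie(E\setminus S_i)\subseteq I_{\B_i}$ is clear since $\pi_{\B_i}$ kills $E\setminus S_i$. The split exact sequence~(\ref{eq3}) identifies $\lie=\im(s_{\B_i})\oplus I_{\B_i}=\subalg_\lie(S_i)\oplus I_{\B_i}$ as graded vector spaces; since $\subalg_\lie(E\setminus S_i)$ is an ideal, $\lie/\subalg_\lie(E\setminus S_i)$ is generated by the images of $S_i$, so $\lie=\subalg_\lie(S_i)+\subalg_\lie(E\setminus S_i)$, and this sum is direct because $\subalg_\lie(S_i)\cap\subalg_\lie(E\setminus S_i)\subseteq\im(s_{\B_i})\cap I_{\B_i}=0$. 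Comparing the two direct sum decompositions degree by degree, together with $\subalg_\lie(E\setminus S_i)\subseteq I_{\B_i}$, forces $\subalg_\lie(E\setminus S_i)=I_{\B_i}$.
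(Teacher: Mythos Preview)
Your argument is correct, and in fact supplies far more detail than the paper does: the paper gives no proof of this proposition at all, merely announcing it as a generalization of Proposition~\ref{prp2}, which in turn is only asserted to be ``easily proved by induction.'' Your write-up makes explicit both the inductive scheme and the key auxiliary fact that replacement forces $\subalg_\lie(S_i)'\subseteq\subalg_\lie(S_i\setminus S_j)$ for $i\ne j$ (and, by the same token, that $\subalg_\lie(E\setminus S_i)$ is an ideal). These are exactly the ingredients one needs and your verification of them is sound; in particular, your use of closedness of $\B_i$ and $\B_j$ to pin down $C\cap S_j=\{z\}$, and then of replacement at the pair $(C,z)$, is the right mechanism.

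Two very minor remarks on presentation. First, when you write ``write $w=[s_1,w_0]$'' you of course mean that $w$ is a linear combination of such brackets; since the target $K$ is a subspace this is harmless, but it may be worth saying so. Second, in the inner induction on $\deg u$ you implicitly use that every element of $\subalg_\lie(E\setminus S_i)$ of degree $\ge 2$ is a combination of brackets $[u_1,u_2]$ with $u_1,u_2$ again in $\subalg_\lie(E\setminus S_i)$; this is immediate because that subalgebra is generated in degree one, but a reader might appreciate the one-line justification. Neither point affects correctness.
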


\subsection{Application to graphs}
Paulo Lima-Filho and Hal Schenk \cite{lim} proved a general formula for the series of the holonomy Lie algebra of graph arrangements. We will now describe their technique. Let $G$ be a graph and $\A_G$ the corresponding set-arrangement on the edge set consisting of 2-flats of size at least three. A 2-flat has at most three elements, since 4 edges always contain 3 edges which is not a triangle and hence has rank 3. Hence $\A_G$ consists of all triangles in the graph. Let $L_G$ be the holonomy Lie algebra of $\A_G$. Hence $L_G$ is the free Lie algebra on the set of edges modulo all $[x,y]$ where $x,y$ is not part of a triangle and also for each triangle $(a,b,c)$ there are the relations $[a,b]-[b,c]$ and $[a,c]-[c,b]$. A subgraph of $G$ is closed in the meaning above if for all pairs $a,b$ of edges in the subgraph such that there is a triangle $(a,b,c)$ in $G$, it holds that also $c$ is in the subgraph.  

Let $A$ be a vertex and $V$ the subgraph of all edges going out from $v$. Let $K$ be the subgraph of edges between endpoints of the edges in $V$, let $G_1=V\cup K$ and $G_2=G\setminus V$. Then 
\begin{itemize}
\item $G_1$ and $G_2$ are closed in $G$.
\item $K$ is closed in $G_1$ and $G_2$.
\item If $K$ is complete, then $\ker(L_G\xrightarrow{\pi} L_{G_2})$ is free on the edges in $V$.
\end{itemize}
The first two claims are easily checked. We prove the last claim. We know that $\ker(\pi)=\subalg_{L_G}(V)$. Define an action of $L_{G_2}$ on $\F(V)$ by 
$$
x\circ v=
\begin{cases}
0&\text{if }x\notin K\\
[v,w]&\text{if }v,w\in V\text{ and }(x,v,w) \text{ is a triangle}
\end{cases} 
$$
We want to prove that the relations in $L_{G_2}$ operate as zero. If $[x,y]$ is a relation, then either one of $x,y$ is not in $K$, and then $[x,y]$ operates as zero or $x,y$ have no common vertex and then $x\circ v=0$ or $y\circ v=0$ for all $v\in V$. Now suppose $(x,y,z)$ is a triangel in $G_2$. If two of $x,y,z$ are not in $K$ then $[x,y],[y,z],[z,x]$ all operate as zero. If two of $x,y,z$ are in $K$, then also the third is. We can look at the subgraph consisting of four vertices $A,B,C,D$ and the four edges $v_1=AB,\ v_2=AC,\ v_3=AD$ and $x=BC,\ y=CD,\ z=BD$. A computation shows that $[x,y]\circ v_1=[y,z]\circ v_1=[z,x]\circ v_1=[v_1,[v_3,v_2]]$. It follows that there is a  semi-direct product of $L_{G_2}$ operating on $\F(V)$ which is a quotient of $L_G$ and at the same time this Lie algebra maps onto $L_G$. Hence they are isomorphic and the claim follows.
\begin{crl}{\rm(Kohno)} The holonomy Lie algebra of the complete graph $K_n$ is a successive semi-direct product of free Lie algebras on $1,2,\ldots,n-1$ generators and the series is $\Pi_{i=1}^{n-1}1/(1-it)$.
\end{crl}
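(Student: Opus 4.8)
The plan is to induct on $n$, feeding the complete graph into the three bulleted facts established just above with a well-chosen distinguished vertex. I would pick any vertex $v$ of $K_n$ and let $V$ be the star of $v$, i.e. the set of the $n-1$ edges incident to $v$. The endpoints of those edges are exactly the remaining $n-1$ vertices, and since $K_n$ is complete every pair of them is joined by an edge; hence the subgraph $K$ of edges between those endpoints is the complete graph $K_{n-1}$, which is in particular complete in the sense needed for the third bullet. Then $G_1=V\cup K$ is all of $K_n$, while $G_2=G\setminus V$ is, on its edge set, the complete graph $K_{n-1}$, with the same triangles, so $L_{G_2}=L_{K_{n-1}}$.

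With this setup the third bullet applies (its hypothesis ``$K$ complete'' holds), so $\ker(L_{K_n}\xrightarrow{\pi}L_{K_{n-1}})$ is the free Lie algebra $\F(V)$ on the $n-1$ edges of $V$; moreover the argument proving that bullet actually exhibits $L_{K_n}$ as the semi-direct product $\F(V)\rtimes L_{K_{n-1}}$, the action and the vanishing of the relations of $L_{K_{n-1}}$ on $\F(V)$ being the content of the identity $[x,y]\circ v_1=[y,z]\circ v_1=[z,x]\circ v_1=[v_1,[v_3,v_2]]$ computed there. Taking $L_{K_1}=0$ (no edges), or $L_{K_2}=\F(\{v\})$ (one edge, no triangle), as base case and assuming inductively that $L_{K_{n-1}}$ is a successive semi-direct product of free Lie algebras on $1,2,\dots,n-2$ generators, I conclude that $L_{K_n}=\F(V)\rtimes L_{K_{n-1}}$ is such a product on $1,2,\dots,n-1$ generators, which is the first assertion.

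For the series I would use two standard facts. First, the universal enveloping algebra of a free Lie algebra on $m$ generators placed in degree $1$ is the free associative algebra on $m$ degree-$1$ generators, with Hilbert series $1/(1-mt)$. Second, for a split extension of graded Lie algebras $0\to\mathfrak n\to\g\to\mathfrak q\to0$, the PBW theorem applied to an ordered PBW basis listing a basis of $\mathfrak n$ before one of $\mathfrak q$ gives $\U(\g)\cong\U(\mathfrak n)\otimes\U(\mathfrak q)$ as graded vector spaces, so Hilbert series are multiplicative over semi-direct products. Running this down the tower $L_{K_n}\cong\F(n-1)\rtimes(\F(n-2)\rtimes(\cdots\rtimes\F(1)))$ gives the series $\prod_{i=1}^{n-1}1/(1-it)$.

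The genuine content — that completeness of $K$ forces the kernel to be free and the extension to split — is already carried out in the paragraph preceding the corollary, so what is left is bookkeeping: checking that for $K_n$ the subgraphs $V$, $K$, $G_1$, $G_2$ have the shapes claimed, that the closedness (and, for the bullets that need it, replacement) hypotheses hold here — they do, since holonomy Lie algebras satisfy replacement and complete graphs sit inside one another in the obvious closed way — and that $L_{G_2}=L_{K_{n-1}}$. None of this looks like a real obstacle; the induction together with the multiplicativity of Hilbert series then finishes the proof.
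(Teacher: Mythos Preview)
Your proposal is correct and is exactly the argument the paper intends: the corollary is stated without proof precisely because it follows by iterating the third bullet with $v$ any vertex of $K_n$, so that $V$ has $n-1$ edges, $K=G_2=K_{n-1}$ is complete, and the semi-direct product structure established in that paragraph feeds the induction. Your remarks on the base case and on the multiplicativity of Hilbert series via PBW are the standard way to read off the product formula, and the appeal to replacement is appropriate since the holonomy relations $[e_i,c]=0$ make each $L_A'$ generated by any $|A|-1$ of the variables.
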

\begin{crl}
{\rm (Papadima-Suciu)} The holonomy Lie algebra $L_G$ of a graph $G$ is not decomposable iff $G$ contains $K_4$ as a subgraph.
\end{crl}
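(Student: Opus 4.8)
The plan is to run both implications through Proposition~\ref{prp2}, applied to the set-arrangement $\A_G$ of all triangles of $G$ with the decomposition (\ref{eq2}) into the singletons $\{A\}$. Since the holonomy Lie algebra satisfies replacement, that proposition says $L_G$ is decomposable precisely when $[x,[y,z]]=0$ in $L_G$ for every triangle $A$, every edge $x\in E\setminus A$, and all $y,z\in A$. As $[y,y]=0$ and, for a triangle $\{a,b,c\}$, the holonomy relations give $[a,b]=[b,c]=[c,a]$, it is enough to test one bracket $[x,[a,b]]$ for each triangle $\{a,b,c\}$ and each edge $x$ outside it. So the corollary comes down to the statement that this bracket vanishes for all such choices if and only if $G$ contains no $K_4$.

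For ``no $K_4$ $\Rightarrow$ decomposable'' I would fix a triangle on vertices $\{1,2,3\}$, with edges $a=e_{12}$, $b=e_{13}$, $c=e_{23}$, and an edge $x\notin\{a,b,c\}$, and split on how $x$ meets $\{1,2,3\}$; it meets this set in at most one vertex, as two vertices would force $x\in\{a,b,c\}$. If $x$ is vertex-disjoint from the triangle, then $[x,a]=[x,b]=0$ (two edges with no common vertex lie in no common triangle, so their bracket is one of the defining relations), and $[x,[a,b]]=0$ follows by the Jacobi identity. The one genuine case is $x=e_{14}$, meeting the triangle in the single vertex $1$. Here $e_{24}$ and $e_{34}$ cannot both be edges of $G$, or $\{1,2,3,4\}$ would carry a $K_4$; if neither is present we finish as before, and if exactly one is present, say $e_{24}\in E(G)$ and $e_{34}\notin E(G)$ (the other subcase being symmetric under interchanging vertices $2$ and $3$), I would evaluate $[e_{14},[e_{12},e_{13}]]$ by alternately rewriting a bracket of two adjacent edges through the holonomy relation of the triangle they span and applying the Jacobi identity to discard brackets of non-adjacent edges. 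Since $e_{34}\notin E(G)$, the brackets $[e_{14},e_{13}]$ and $[e_{23},e_{24}]$ are themselves defining relations, and the computation telescopes to $0$. Proposition~\ref{prp2} then yields $I=0$.

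For ``$G$ contains $K_4$ $\Rightarrow$ not decomposable'' the plan is to transport a nonzero triple bracket from $L_{K_4}$ down to $L_G$. Fix a $K_4$-subgraph on vertices $\{1,2,3,4\}$ and consider the assignment sending each of its six edges to itself and every other edge of $G$ to $0$. The crucial observation is that any two edges of this $K_4$ that share a vertex already span a triangle lying inside the $K_4$; hence every triangle of $G$ contains either all three or at most one of the six $K_4$-edges, and therefore each defining relation of $L_G$ is sent either to a defining relation of $L_{K_4}$ or to $0=0$. Thus the assignment descends to a surjection $\rho\colon L_G\to L_{K_4}$. Applying the graph construction above inside $K_4$ with the vertex $v=4$, the graph $K$ of edges among $\{1,2,3\}$ is the complete $K_3$, so $\subalg_{L_{K_4}}(V)$ is free on $V=\{e_{14},e_{24},e_{34}\}$; hence $[e_{34},[e_{14},e_{24}]]\ne0$ in $L_{K_4}$, and so also in $L_G$ after applying $\rho$. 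Since $\{e_{12},e_{14},e_{24}\}$ is a triangle of $G$ and $e_{34}$ is an edge of $G$ outside it, Proposition~\ref{prp2} gives $I\ne0$.

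The hard part is this second direction. A $K_4$-subgraph need not be closed in $G$ (unlike the subgraphs $G_1,G_2,K$ above), so the split exact sequences (\ref{eq1}), (\ref{eq2}) cannot simply be restricted to it; the work is in checking that the crude projection $\rho$ is nonetheless well defined --- precisely the triangle/shared-vertex observation above --- and in having a concrete enough description of $L_{K_4}$ to certify that a triple bracket is genuinely nonzero. Here the freeness of $\subalg_{L_{K_4}}(V)$ provided by the graph construction does exactly that; alternatively one could compare the degree-$3$ dimension of $L_{K_4}$, read off from Kohno's product formula, with the smaller value forced by decomposability, and note that they disagree.
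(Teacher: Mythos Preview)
Your proof is correct, and the ``no $K_4 \Rightarrow$ decomposable'' direction is essentially the contrapositive of the paper's argument: the paper starts from a nonvanishing $[x,[y,z]]$ and, via the same Jacobi and triangle-relation manipulations you sketch, forces both remaining edges of a $K_4$ to exist, whereas you fix the triangle first and case-split on which of those edges are absent. The content of the computation is the same.

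Where you genuinely diverge is in the ``$K_4 \Rightarrow$ not decomposable'' direction. The paper shows $L_{K_4}$ sits inside $L_G$ as a Lie \emph{subalgebra}, by induction on the number of vertices: deleting a vertex outside the chosen $K_4$ produces a closed subgraph $G_2$, so $L_{G_2}\hookrightarrow L_G$, and the induction hypothesis gives $L_{K_4}\subseteq L_{G_2}$. You instead go the other way and build a \emph{surjection} $\rho\colon L_G\twoheadrightarrow L_{K_4}$ by killing all non-$K_4$ edges, using the observation that a triangle of $G$ meets the $K_4$-edge-set in $0$, $1$, or $3$ elements (and that two $K_4$-edges commuting in $L_G$ must be opposite edges, hence also commute in $L_{K_4}$). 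Your route is shorter and avoids the induction altogether; the paper's route yields the stronger structural statement $L_{K_4}\subseteq L_G$, which is of independent interest. Both routes then appeal to the same freeness of $\subalg_{L_{K_4}}(V)$, with $V$ the three edges through one vertex of $K_4$, to certify that the triple bracket is nonzero.
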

\begin{proof} We know that $L_G$ is not decomposable iff there are three edges $x,y,z$ which is not a triangle such that $[x,[y,z]]\neq0$ in $L_G$. Suppose $K_4$ is a subgraph of $G$. By the last claim above we know that the three edges $x,y,z$ going from one vertex in $K_4$ satisfy $[x,[y,z]]\neq0$ in $L_{K_4}$. We prove by induction that $L_{K_4}$ is a Lie subalgebra of $L_G$. Suppose $A$ is a vertex in $G$ which is not in the subgraph $K_4$ and consider the subdivision of $G$ in $G_1,K,G_2$ as above. Since $G_2$ is closed in $G$, $L_{G_2}$ is a Lie subalgebra of $L_G$. Since $K_4$ is a subgraph of $G_2$ it follows by induction that $L_{K_4}$ is a Lie subalgebra of $L_{G_2}$ and hence of $L_G$ which proves one direction of the claim. Suppose there are three edges $x,y,z$ in $G$ which is not a triangle such that $[x,[y,z]]\neq0$ in $L_G$. Then $[y,z]\neq0$ and hence there is an edge $u\ne x$ such that $y,z,u$ is a triangle and $[x,[y,z]]=[x,[z,u]]=[x,[u,y]]$. Let the vertices of the triangle be $A,B,C$, where $y=AB,z=AC,u=BC$. The edge $x$ must be connected to the triangle, say $x=AD$. Then $[x,u]=[x,BC]=0$ and hence $[x,[y,z]]=[[x,z],u]=[u,[x,y]]$ which implies that $[x,z]\ne0$ and $[x,y]\ne0$. Hence there are edges $v=CD,w=BD$ and it follows that $ABCD$ is a $K_4$.  
\end{proof}
Here is an example of a non-representable set-arrangement, where the same technique as above may be used to obtain the structure of the holonomy Lie algebra $L$. It is the "Fano" matroid of the seven non-zero vectors in $\mathbb Z_2^3$ (and hence representable in characteristic 2). There are seven triples of dependent vectors, which may be numbered as follows
$$
(123),\ (145),\ (167),\ (246),\ (257),\ (347),\ (356)
$$
We have an exact sequence $0\to\subalg_L(4567)\to L\to L_{(123)}\to0$. We may define an action of  $L_{(123)}$ onto $\F(4567)$ using the six last triples and checking that the relations operate as zero. This gives that $\subalg_L(4567)$ is free and we get that $L$ is a successive semi-direct product of free Lie algebras on 4,2,1 generators. Dividing $L$ by the ideal generated by $[1,2]$ gives the "non-Fano" holonomy Lie algebra with set-arrangement as above but with (123) deleted. Its series seems difficult to determine.

Here is the method by Lima-Filho and Schenk \cite{lim} to obtain the series for $L_G$ for an arbitrary graph $G$. Consider the subdivision of $G$ above. We have a commutative diagram of surjective maps of Lie algebras,
$$
\begin{CD}
L_G@>>> L_{G_1}\\@VVV @VVV\\
L_{G_2} @>>> L_K
\end{CD}
$$  
which they  proved is a pullback diagram. There is also a pushout diagram of injective maps of Lie algebras,
$$
\begin{CD}
L_K@>>> L_{G_1}\\@VVV @VVV\\
L_{G_2} @>>> L_G
\end{CD}
$$ 
yielding $L_G\cong (L_{G_1}\oplus L_{G_2})/L_K$ (where $L_K$ is considered as the "diagonal" in the direct sum). From this they find their formula by induction. The result is that the series contains the series for the free Lie algebra on $j$ generators, $1/(1-jt)$, with exponent
$$
\sum_{s\ge j}(-1)^{s-j}{s\choose j}\kappa_s
$$ 
where $\kappa_s$ is the number of complete subgraphs of $G$ on $s+1$ vertices. 

\vspace{10pt}
{\bf Question 1:} Is this exponent always positive?

\vspace{10pt}
In \cite{roo} there are examples of "exotic" holonomy Lie algebras, e.g., the LCS series, i.e., the Hilbert series of the enveloping algebra, can be transcendental, disproving a natural question, inspired by the result above for graphs, namely if the holonomy Lie algebra  always can be obtained in a finite number of steps from free Lie algebras by extensions and pullbacks. But the following question might be open.

\vspace{10pt}
{\bf Question 2:}
 Is the global dimension finite for the enveloping algebra of a holonomy Lie algebra $\eta$, i.e., is it true that $\tor_i^{\U(\eta)}(k,k)$ for big enough $i$?

\end{document}